\newtheorem{theorem}{Theorem}[section]
\newtheorem{thm}[theorem]{Theorem}
\newtheorem{lemma}[theorem]{Lemma}
\newtheorem{lem}[theorem]{Lemma}
\newtheorem{proposition}[theorem]{Proposition}
\newtheorem{corollary}[theorem]{Corollary}
\theoremstyle{definition}
\newtheorem{definition}[theorem]{Definition}
\newtheorem{defn}[theorem]{Definition}
\theoremstyle{remark}
\newtheorem{rem}[theorem]{Remark}
\numberwithin{equation}{section}
 \DeclareMathAlphabet{\mathpzc}{OT1}{pzc}{m}{it}
 \newcommand{\pp}{\mathbf{P}}
 \newcommand{\E}{\mathbb{E}}            
 \newcommand{\e}{\varepsilon}
 \newcommand{\N}{\mathbb{N}}
 \newcommand{\R}{\mathbb{R}}
 \newcommand{\FF}{\mathcal{F}}
 \newcommand{\PP}{\mathbb{P}}
 \newcommand{\Be}{\begin{equation}}
 \newcommand{\Ee}{\end{equation}}
 \newcommand{\Bs}{\begin{split}}
 \newcommand{\Es}{\end{split}}
  \newcommand{\Bes}{\begin{equation*}}
 \newcommand{\Ees}{\end{equation*}}
 \newcommand{\BT}{\begin{thm}}
 \newcommand{\ET}{\end{thm}}
 \newcommand{\Bp}{\begin{proof}}
 \newcommand{\Ep}{\end{proof}}
 \newcommand{\BL}{\begin{lem}}
 \newcommand{\EL}{\end{lem}}
 \newcommand{\BP}{\begin{proposition}}
 \newcommand{\EP}{\end{proposition}}
 \newcommand{\BC}{\begin{corollary}}
 \newcommand{\EC}{\end{corollary}}
 \newcommand{\BR}{\begin{rem}}
 \newcommand{\ER}{\end{rem}}
 \newcommand{\BD}{\begin{defn}}
 \newcommand{\ED}{\end{defn}}
 \newcommand{\BI}{\begin{itemize}}
 \newcommand{\EI}{\end{itemize}}
  \newcommand{\dif}{{\rm d}}
\def\EE{\mathbb{E}}
\def\LL{\mathcal L}\def\PP{\mathbb P}
\def\<{\left<}\def\>{\right>}
\def\({\left(}\def\){\right)}
\begin{document}
\title
[Asymptotics of the entropy production rate for  OU Process]{Asymptotics of the entropy production rate for $d$-dimensional Ornstein-Uhlenbeck processes}

\author[R. Wang]{Ran Wang}
\address{School of Mathematical Sciences, University of Science and Technology of China, Hefei, China.} \email{wangran@ustc.edu.cn}

\author[L. Xu]{Lihu Xu}
\address{Department of Mathematics, Faculty of Science and Technology,  University of  Macau, Taipa, Macau.}
\email{lihuxu@umac.mo}

\maketitle
\begin{minipage}{140mm}
\begin{center}
{\bf Abstract}
\end{center}
\ \ \ \ In  the context of  non-equilibrium statistical physics, the entropy production rate is an important concept to describe how far a specific state of a system is from its equilibrium state.
In this paper, we establish a central limit theorem and  a moderate deviation principle for  the entropy production rate of $d$-dimensional Ornstein-Uhlenbeck processes,  by the techniques of functional inequalities such as Poincar\'e inequality and log-Sobolev inequality.  As an application, we obtain a law of  iterated logarithm for the entropy production rate.

\end{minipage}

\vspace{4mm}

\medskip
\noindent
{\bf Keywords}: Entropy production rate; Ornstein-Uhlenbeck process; Central limit theorem; Moderate deviation principle; Law of  iterated logarithm.
\medskip
\noindent

\medskip
\noindent
{\bf Mathematics Subject Classification (2010)}: \ {60F05; 60F10; 60G10}.


\section{Introduction}

   Entropy production rate (EPR) was first proposed in non-equilibrium statistical physics to describe how far a specific state of a system
    is from its equilibrium state (\cite{JNPS14,JQQ,LS,Ma,TMO12}), which plays a key role in thermodynamics
of irreversible processes.  EPR is usually defined by the relative entropy of the process with respect to its time reversal.



For stationary Markov processes, by ergodic theorem, the sample entropy production rate (see \eqref{epr})  converges to the EPR almost surely. Furthermore, in many cases, the sample EPR has a large deviation property and the rate function has a symmetry of Gallavotti-Cohen type,  see  \cite{GJ, JQZ,LS} and so on. Also see  survey papers \cite{Ge} and \cite{Tou} for  applications of large deviations in non-equilibrium systems.

\vskip0.3cm

However, there seem no other asymptotic results such as moderate deviation principle (MDP), central limit theorem (CLT) and law of iterated logarithm (LIL) for EPR.
For the general high dimensional diffusion processes, it seems very hard to establish these asymptotics. The difficulties arise
from the several points as follows.  On the one hand, the sample EPR has a complicated form which includes the density function of the invariant probability measure (see  \eqref{sample epr} below). Usually
the density function does not have an explicit expression or it is too complicated to use.
On the other hand, due to the complicated form of sample EPR again, there are not known criteria
directly applicable to prove MDP.

\vskip0.3cm

The aim of this paper is to partly fill in this gap by studying the above asymptotics for EPR of $d$-dimensional Ornstein-Uhlenbeck processes. Thanks to the nice form of the density function of invariant measure, we use the techniques of functional inequalities such as Poincar\'e inequality and log-Sobolev inequality, to establish a CLT and an MDP.  As an application of MDP, we obtain an LIL for EPR.
To show LIL, we first sample a Markov chain from the process and prove its LIL, then fill in the gap between the chain. Due to the lack of independence, we sample the Markov chain after a suitable long time
to get some weak dependence when proving LIL.

\vskip 3 mm

The paper is organized as follows. In Section 2, we first introduce the EPR for Ornstein-Uhlenbeck process, and then give the main results of this paper. In Section 3,  we recall a criterion of CLT and MDP for additive functionals of   Markov process, and then prove
the first main result. In the last section, we prove LIL by the technique of MDP.

\section{Ornstein-Uhlenbeck Processes and Entropy Production Rate}
We are concerned with the Ornstein-Uhlenbeck (OU) process $\xi=\{\xi_t\}_{t\ge0}$ which is the solution of the following stochastic differential equation
\begin{equation}\label{eq: OU}
\dif \xi_t=B\xi_t\dif t +\Sigma\dif W_t, \ \ \ \
\end{equation}
with the initial value $\xi_0$, where $B=(b_{ij})\in \R^{d\times d}$  and  $\Sigma=(\sigma_{ij})\in \R^{d\times d}$ are both constant  matrices, and $\{W_t\}_{t\ge0}$ is a $d$-dimensional Brownian motion on a probability space $(\Omega, \FF, \mathbb P)$ with a filtration $\{\FF_t\}_{t\ge0}$.  We assume that
\begin{equation}\label{A1}
   \text{ the diffusion coefficient $Q:=\Sigma\Sigma^T$ is strictly positive definite.}
\end{equation}
It is well known that the Markov process $\xi$ has an invariant probability $\mu$ if and only if
\begin{equation}\label{A2}
  \ \ \  \text{all the eigenvalues of $B$ have negative real parts,}
\end{equation}
 see \cite{CG02,Qian01}, and in that case
\begin{align}\label{mu}
\mu=N(0, \Gamma),
\end{align}
the Gaussian distribution on $\R^d$ with mean vector $0$ and covariance matrix
\begin{align}\label{gamma}
\Gamma=\int_0^{\infty}e^{sB}Qe^{sB^T}\dif s.
\end{align}
Notice that $\Gamma$ is nonsingular under conditions \eqref{A1} and \eqref{A2}. Let $\rho(x)$ be the probability density function of  $\mu$ with respect to the Lebesgue measure  $\dif x$ on $\R^d$. Then
$$
\rho(x)=(2\pi)^{-\frac d2}(\det \Gamma)^{-\frac12}\exp\left\{-\frac{\langle x, \Gamma^{-1}x\rangle}{2} \right\}.
$$
We refer to \cite{JQQ} for more details.

\vskip0.3cm

In this paper, we always assume that \eqref{A1} and \eqref{A2} hold, and the initial measure of $\xi$ is the invariant measure $\mu$. Thus, $\xi$ is stationary. 
Let $\pp_{[s,t]}$ and $\pp^{-}_{[s,t]}$ be  the distributions of $\{\xi_u\}_{s\le u\le t}$ and  $\{\xi_{t+s-u}\}_{s\le u\le t}$ for any $0\le s\le t$.
Recall that the process $\xi=\{\xi_t\}_{t\ge0}$ is said to be {\bf reversible} if   $\pp_{[s,t]}=\pp^{-}_{[s,t]}$  for any $0\le s\le t$. It is shown that the stationary OU process $\xi$ is reversible if and only if the coefficients $B$ and $Q$ satisfy the symmetry condition $Q^{-1}B=(Q^{-1}B)^T$, see \cite{CG02, Qian01}. Notice that when $d=1, B<0$, the OU process $\xi$ is always reversible.

\vskip0.3cm
Now we give the definition and properties of the EPR of $\xi$, see \cite{JQQ} for details.

\begin{definition} For  the stationary diffusion process $\xi$,   the {\bf  sample entropy production rate} is defined by
\begin{align}\label{epr}
  e_p(t)(\omega):=\frac1t\log\frac{\dif \pp_{[0,t]} }{\dif \pp^-_{[0,t]}}(\omega),
\end{align}
and the  {\bf  entropy production rate} is defined by
\begin{align}
e_p:= \lim_{t\rightarrow+\infty} \E^{\pp_{[0,t]}} \left[ e_p(t)  \right].
\end{align}

\end{definition}

 By the ergodic theorem of the stationary process,   for $\mathbb P$-almost every $\omega\in \Omega$,
 $$
 \lim_{t\rightarrow \infty}e_p(t)(\omega)=e_p.
 $$
By the well-known Cameron-Martin-Girsanov formula, one can prove that the two probability measures  $\pp_{[0,t]}$ and $\pp^-_{[0,t]}$ on $(\Omega,\FF_t)$ are equivalent. Moreover, for $\mathbb P$-almost every $\omega\in\Omega$,  the Radon-Nikodym derivative
\begin{align}\label{sample epr}
&\frac{\dif \pp_{[0,t]}}{\dif \pp^-_{[0,t]}}(\xi_{\cdot}(\omega) )\\\notag
=&\exp\Big[\int_0^t(2Q^{-1}B\xi_s(\omega)-\nabla\log \rho(\xi_s)(\omega))^T \Sigma\dif W_s(\omega) \\ \notag
 &+\frac{1}{2}\int_0^t((2Q^{-1}B\xi_s(\omega)-\nabla\log \rho(\xi_s)(\omega))^T Q(2Q^{-1}B\xi_s(\omega)-\nabla\log \rho(\xi_s)(\omega))\dif s \Big],
\end{align}
 and
 \begin{align} \label{eq: epr finite}
 e_p=\frac12\int_{\R^d}(2Q^{-1}Bx-\nabla\log \rho(x))^TQ(2Q^{-1}Bx-\nabla\log \rho(x))\rho(x)\dif x<+\infty.
\end{align}

\vskip0.3cm
In this paper, we shall investigate deviations of $e_p(t) $ from the EPR $e_p$, as $t$ increases to $+\infty$,
that is,  the asymptotic behavior of
$$
\frac{\sqrt t }{ \lambda(t)}\left(e_p(t) -e_p\right), \ \ \ t\ge0,
$$
 where $\lambda(t)$ is some deviation scale which strongly influences the above asymptotic behavior:
\begin{itemize}
  \item[(1)]if $\lambda(t)= 1$, we are in the domain of the central limit theorem (CLT);
  \item[(2)] the case $\lambda(t)= \sqrt t$ provides some large deviation principle (LDP) estimates;
  \item[(3)] to fill in the gap between the CLT scale  and the large deviations scale, we shall study the so-called moderate deviation principle (MDP), see \cite{Dembo-Zeitouni}, that is when the deviation scale satisfies
\begin{equation} \label{lambda}
 \lambda(t)\to+ \infty,\ \   \lambda(t)/\sqrt t\to0\ \ \ \text{as}\ t\to+\infty.
 \end{equation}\end{itemize}

\vskip0.3cm

For any $p>1, l>0$, let
\Be \label{e:Upl}
U^p_{\mu}(l):=\left\{\nu\in \mathcal M_1(\R^d);\nu\ll\mu \text{ and }   \left\|\frac{\dif \nu}{\dif \mu}\right\|_{L^p(\mu)}\le l  \right  \},
\Ee
here $\mathcal M_1(\R^d)$ is the space of all probability measures on $\R^d$.

The first main result is the following theorem about CLT and MDP of EPR.

\begin{theorem} \label{thm main}
Under conditions \eqref{A1} and \eqref{A2},     $e_p(t)$ has the following properties:
\begin{itemize}
  \item[(1)]$ \mathbb P_{\nu}(\sqrt t[e_p(t)-e_p]\in \cdot, )$ converges weakly to  a centered Gaussian measure $N(0, \sigma^2)$ with variance $\sigma^2$ as $t\rightarrow \infty$, uniformly  for the initial distributions $\nu \in U_{\mu}^p(l)$ for any $p>1, l>0$;
  \item[(2)]  as $t\rightarrow +\infty$,  $\mathbb P_{\nu} \left(\frac{\sqrt t}{\lambda(t)}[e_p(t)-e_p]\in \cdot\right)$ satisfies a large deviation principle with speed $\lambda^{-2}(t)$ and rate function $I(x)=x^2/(2\sigma^2)$, uniformly  for the initial distributions $\nu \in U_{\mu}^p(l)$ for any $p>1, l>0$,
\end{itemize}
here the asymptotic variance $\sigma^2:=\lim_{t\rightarrow \infty} t \EE\left[e_p(t)-e_p \right]^2$ exists, and  $\lambda(t)$ satisfies \eqref{lambda}.
\end{theorem}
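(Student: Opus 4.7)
The plan is to reduce the CLT and MDP for $e_p(t)$ to the CLT and MDP for additive functionals and continuous martingales of the stationary Markov process $\xi$. Observing that $\nabla\log\rho(x)=-\Gamma^{-1}x$, the integrand
\[
  f(x) := 2Q^{-1}Bx - \nabla\log\rho(x) = (2Q^{-1}B+\Gamma^{-1})x
\]
of \eqref{sample epr} is linear in $x$, so together \eqref{sample epr}--\eqref{eq: epr finite} yield the decomposition
\[
  t\bigl[e_p(t)-e_p\bigr] = M_t + A_t,
\]
where $M_t := \int_0^t f(\xi_s)^T\Sigma\,\dif W_s$ is a continuous square-integrable martingale and $A_t := \int_0^t g(\xi_s)\,\dif s$ is a centered additive functional whose integrand $g(x):=\tfrac12 f(x)^T Q f(x) - e_p$ is a centered quadratic polynomial. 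Because $\mu$ is Gaussian, both $f$ and $g$ lie in every $L^p(\mu)$.

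I would next identify the asymptotic variance via the Poisson equation. The OU generator $\cL$ preserves the space of polynomials of each degree, so $-\cL h = g$ admits a quadratic polynomial solution $h$; applying Itô's formula to $h(\xi_t)$ rewrites $A_t$ as a boundary term plus a martingale, giving
\[
  t\bigl[e_p(t)-e_p\bigr] = h(\xi_0)-h(\xi_t) + \int_0^t\bigl(f(\xi_s)+\nabla h(\xi_s)\bigr)^T\Sigma\,\dif W_s.
\]
The boundary term is $L^p$-bounded uniformly in $t$ under $\PP_\nu$ for $\nu\in U_\mu^p(l)$, hence negligible after dividing by $\sqrt{t}$ or $\lambda(t)$. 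The quadratic variation of the remaining martingale equals $\int_0^t(f+\nabla h)^T Q(f+\nabla h)(\xi_s)\,\dif s$, which by ergodicity of $\xi$ converges almost surely to $t\sigma^2$ with
\[
  \sigma^2 := \int_{\R^d}\bigl(f(x)+\nabla h(x)\bigr)^T Q\bigl(f(x)+\nabla h(x)\bigr)\,\mu(\dif x).
\]
Part (1) then follows from the continuous martingale CLT under $\PP_\mu$; for part (2) I plan to invoke the MDP criterion for additive functionals of Markov processes whose invariant measure satisfies a log-Sobolev inequality (the one to be recalled in Section 3). The Gaussian measure $\mu$ enjoys a log-Sobolev inequality by Bakry--\'Emery, so the Mehler semigroup is hyperbounded and the hypotheses of that criterion are verified by $g$ and by $(f+\nabla h)^T Q(f+\nabla h)$. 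The passage from $\PP_\mu$ to $\PP_\nu$, uniform over $\nu\in U_\mu^p(l)$, would be carried out via Hölder's inequality combined with hyperboundedness of the semigroup, which absorbs the $L^p(\mu)$ density $\dif\nu/\dif\mu$.

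The main obstacle is the unboundedness of $f$ (linear) and $g$ (quadratic): $M_t$ has an unbounded bracket and $A_t$ is not built from a bounded integrand, so the simpler bounded-function CLT/MDP criteria are unavailable. Since $\mu$ possesses only local exponential moments of quadratic functions, the exponential moment
\[
  \E^\mu \exp\!\left(\frac{\lambda(t)}{t}\int_0^t g(\xi_s)\,\dif s\right)
\]
that controls the MDP must be handled with care. This is feasible because in the moderate regime one has $\lambda(t)/t\to 0$, so the prefactor drives the exponent into the range where the log-Sobolev inequality yields the required superexponential estimates; the same inequality also furnishes exponential concentration of $\langle M\rangle_t/t$ around $\sigma^2$, which is precisely what the MDP for the martingale part demands.
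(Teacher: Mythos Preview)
Your proposal is correct but takes a genuinely different route from the paper.

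The paper does not use the Poisson equation at all. It applies Wu's criterion (the paper's Theorem~3.2, from \cite{Wu95}) \emph{directly} to the full additive functional $A_t = t\cdot e_p(t)$, and simply verifies the three hypotheses: (i) the spectral gap for $P_t$ via the Poincar\'e inequality for $\mu$; (ii) hyperboundedness of $P_t$ via Fuhrman's hypercontractivity result for non-symmetric OU semigroups; and (iii) the exponential moment $\sup_{0\le s\le t_3}\E_\mu e^{\delta|A_s|}<\infty$, obtained by writing $A_t=M_t+\tfrac12\langle M\rangle_t$, applying Novikov's criterion, and bounding $\E_\mu\exp\bigl(\eta\int_0^t|\xi_s|^2\,\dif s\bigr)$ through Wu's Feynman--Kac deviation inequality \cite{Wu00} combined with the log-Sobolev inequality for $\mu$. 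Both the CLT and the MDP then drop out of the same black box, and the uniformity over $\nu\in U_\mu^p(l)$ is already part of Wu's conclusion.

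Your approach instead solves $-\cL h=g$ explicitly (possible because $\cL$ preserves quadratics and $B$ is stable), reducing everything to a single martingale $N_t$ plus a quadratic boundary term. This buys you an explicit formula for $\sigma^2$ and lets you invoke the martingale CLT for part~(1). For part~(2), however, you still need either Wu's criterion applied to $N_t$---which requires the same exponential moment estimate, since $\langle N\rangle_t$ is again a time integral of a quadratic in $\xi_s$---or a martingale MDP together with superexponential concentration of $\langle N\rangle_t/t$ at speed $\lambda^2(t)$, which again comes down to the Feynman--Kac/log-Sobolev bound on $\E_\mu\exp\bigl(\eta\int_0^t|\xi_s|^2\,\dif s\bigr)$. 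So the hard analytic step is identical in both approaches; the Poisson-equation detour does not shortcut it.

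One caution: your sentence ``the Gaussian measure $\mu$ enjoys a log-Sobolev inequality by Bakry--\'Emery, so the Mehler semigroup is hyperbounded'' glosses over the non-symmetry. The paper explicitly points out that the Gross equivalence between log-Sobolev and hypercontractivity can fail for non-reversible generators, and invokes Fuhrman's separate result (Lemma~3.3) to get hyperboundedness of the non-symmetric OU semigroup. Your argument should do the same, or else justify directly why the symmetrized Dirichlet form $\mathcal E(f,f)=\tfrac12\langle Q\nabla f,\nabla f\rangle_\mu$ controlling log-Sobolev suffices here.
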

As an applicaiton of the above theorem, we establish the second main result about LIL in the theorem below:
\begin{theorem}  \label{t:LIL}
Under conditions \eqref{A1} and \eqref{A2},  we have
\begin{align}\label{1}
\limsup_{t\rightarrow +\infty}\sqrt{\frac{t}{ 2\log\log t}} \left( e_p(t)-  e_p\right)=\sigma, \ \  a.s.,
\end{align}
and
\begin{align}\label{2}\liminf_{t\rightarrow +\infty}\sqrt{\frac{t}{ 2\log\log t}} \left( e_p(t)-  e_p\right)=-\sigma, \ \  a.s..
\end{align}
\end{theorem}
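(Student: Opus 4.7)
The strategy is to upgrade Theorem \ref{thm main}(2) to the LIL by the classical subsequence + Borel--Cantelli technique. For the upper bound in \eqref{1}, fix $\theta>1$ and put $t_n=\theta^n$. Applying Theorem \ref{thm main}(2) with $\lambda(t)=\sqrt{2\log\log t}$ gives, for every $\epsilon>0$,
\begin{align*}
\PP\Big(\sqrt{\tfrac{t_n}{2\log\log t_n}}(e_p(t_n)-e_p)>(1+\epsilon)\sigma\Big)\le (\log t_n)^{-(1+\epsilon)^2(1+o(1))},
\end{align*}
which is summable in $n$; Borel--Cantelli and $\epsilon\downarrow 0$ give the bound along $\{t_n\}$. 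To interpolate, decompose $te_p(t)=M_t+A_t$ via \eqref{sample epr}, with $M_t$ an It\^o martingale and $A_t$ a bounded-variation integral of mean $te_p$. Doob's maximal inequality for $M_t-M_{t_n}$ together with standard ergodic estimates for $A_t-A_{t_n}-(t-t_n)e_p$ (and using that the leading linear term cancels in the expansion of $e_p(t)-e_p(t_n)$) show $\sup_{t\in[t_n,t_{n+1}]}|e_p(t)-e_p(t_n)|\le C(\theta-1)\sqrt{\log\log t_n/t_n}$ almost surely for large $n$; letting $\theta\downarrow1$ completes the upper bound.

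For the lower bound, fix $\theta>1$ large and work along the same subsequence $t_n=\theta^n$, setting $\Delta_n:=t_ne_p(t_n)-t_{n-1}e_p(t_{n-1})$. Conditionally on $\FF_{t_{n-1}}$, the strong Markov property identifies $\Delta_n$ with a functional of the OU process started at $\xi_{t_{n-1}}$, and hypercontractivity of the OU semigroup puts the conditional law of $\xi_{t_{n-1}+\tau}$ (for any fixed delay $\tau>0$) in the class $U^p_{\mu}(l)$ for suitable $p,l$ on an event of high probability. Applying the \emph{uniform} MDP lower bound of Theorem \ref{thm main}(2) to the delayed block gives
\begin{align*}
\PP\Big(\Delta_n>(1-\epsilon)\sigma\sqrt{2(t_n-t_{n-1})\log\log t_n}\,\Big|\,\FF_{t_{n-1}}\Big)\ge(\log t_n)^{-(1-\epsilon)^2(1+o(1))}
\end{align*}
on that event; since the right-hand side is not summable in $n$, the conditional Borel--Cantelli lemma produces infinitely many occurrences a.s. Combined with the already-established upper bound $t_{n-1}|e_p(t_{n-1})-e_p|=O(\sqrt{t_{n-1}\log\log t_{n-1}})$, which controls the contribution of the previous block, this shows $\limsup_{t\to\infty}\sqrt{t/(2\log\log t)}(e_p(t)-e_p)\ge\sigma\bigl(\sqrt{1-1/\theta}-C/\sqrt{\theta}\bigr)$ almost surely; letting first $\epsilon\downarrow 0$ and then $\theta\to\infty$ proves \eqref{1}. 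Statement \eqref{2} follows by applying the same argument to $-(e_p(t)-e_p)$, since the rate function $x^2/(2\sigma^2)$ is even.

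The main obstacle is the lower bound: a single-time MDP does not by itself yield an a.s.\ oscillation, and one must quantify the weak dependence between the consecutive blocks $[t_{n-1},t_n]$ and verify that the conditional starting laws of the OU process lie in $U^p_{\mu}(l)$ with probability tending to one. The uniformity over $U^p_{\mu}(l)$ built into Theorem \ref{thm main}(2), together with the hypercontractivity and exponential ergodicity of the OU semigroup, is precisely what makes this programme work; the two-stage limit (first $\epsilon\downarrow 0$, then $\theta\to\infty$) is needed because each finite $\theta$ loses a factor $\sqrt{1-1/\theta}$ in the block variance.
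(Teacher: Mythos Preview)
Your overall strategy coincides with the paper's: MDP along a geometric subsequence plus Borel--Cantelli for the upper bound, and a block argument invoking the uniform MDP conditionally via the Markov property for the lower bound. Two technical points in your sketch, however, do not go through as written.

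\textbf{Upper-bound interpolation.} Doob's $L^p$ maximal inequality on $[t_n,t_{n+1}]$ yields only
\[
\PP\Big(\sup_{t\in[t_n,t_{n+1}]}|M_t-M_{t_n}|>\epsilon\sqrt{t_n\log\log t_n}\Big)\le C_p\,\frac{(\theta-1)^{p/2}}{\epsilon^{p}(\log\log t_n)^{p/2}},
\]
and since $\log\log t_n\sim\log n$ this is not summable in $n$ for any finite $p$, so Borel--Cantelli does not apply. The paper bypasses this by a two-stage reduction: first from continuous $t$ to integers using the unit-interval $L^p$ bound of Lemma~\ref{lem S} (where the length-one window makes the moments summable), and then from integers $j\in(n_k,n_{k+1}]$ to the subsequence by a first-passage maximal argument in the spirit of Ottaviani's inequality: if $S_j$ exceeds the threshold and $|S_{n_{k+1}}-S_j|$ is moderate (controlled conditionally on $\xi_j$ via Lemma~\ref{lem OU} together with a further use of the MDP), then $S_{n_{k+1}}$ already exceeds a slightly smaller threshold. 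It is this maximal device, not a direct moment bound over the whole block, that produces a summable series.

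\textbf{Lower-bound delay.} A fixed delay $\tau>0$ does not place the conditional starting law in a single $U^p_\mu(l)$. For the OU process one has $P(\tau;x,\cdot)=N(e^{\tau B}x,\Gamma_\tau)$, and the $L^2(\mu)$-norm of $\dif P(\tau;x,\cdot)/\dif\mu$ grows like $\exp(c\,|e^{\tau B}x|^2)$; since $\xi_{t_{n-1}}$ is unbounded, no high-probability restriction yields a uniform $l$ for fixed $\tau$. The paper instead restricts to $\{|\xi_{n_{m-1}}|\le m\}$ and waits an additional time $m$ that \emph{grows with the block}: the exponential contraction of $e^{mB}$ forces $|e^{mB}\xi_{n_{m-1}}|\le m\,e^{-cm}\to 0$, so the density ratio sits in a fixed $U^2_\mu(l)$ uniformly in $m$, and the uniform MDP of Theorem~\ref{thm main} is then legitimately applicable. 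With this growing delay in place, your conditional Borel--Cantelli scheme becomes equivalent to the iteration the paper carries out in Step~2.1.
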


\section{The proof of Theorem \ref{thm main}}
\subsection{A Criterion of CLT and MDP}

First, recall the definition of LDP and MDP, see \cite{Dembo-Zeitouni} and \cite{Wu95}.
Let $\mathcal X$ be a complete separable metric space, $\mathcal B(\mathcal X)$ the Borel $\sigma$-field of $\mathcal X$, and $\{X_t\}_{t\ge0}$ a family of  stochastic processes valued in $\mathcal X$.

\begin{definition}(1)  $\{X_t\}_{t\ge0}$ satisfies a large deviation principle (LDP) if there exist a family of positive numbers $\{h(t)\}_{t\ge0}$ which tends to $+\infty$ and a function $I(x)$ which maps $\mathcal X$ into $[0,+\infty]$ satisfying the following conditions:
  \begin{itemize}
  \item[(i)]  for each $l<+\infty$, the level set $\{x:I(x)\le l\}$ is compact in $\mathcal X$;
  \item[(ii)] for each closed subset $F$ of $\mathcal{X}$,
              $$
                \limsup_{t\rightarrow \infty}\frac1{h(t)}\log\mathbb P (X_t\in F)\leq- \inf_{x\in F}I(x);
              $$
   \item[(iii)]for each open subset $G$ of $\mathcal{X}$,
              $$
               \liminf_{t\rightarrow \infty}\frac1{h(t)}\log\mathbb P(X_t\in G)\geq- \inf_{x\in G}I(x),
              $$
   \end{itemize}
here $h(t)$ is called the  speed function and $I(x)$ is the rate function.

(2)  $\{X_t\}_{t\ge0}$ satisfies a  moderate deviation principle (MDP), if $\{\frac{\sqrt t}{\lambda (t)}(X_t-\E X_t)\}_{t\ge0}$ satisfies a large deviation principle, for a family of positive numbers $\lambda(t)$ such that
$$ \lambda(t)\to+ \infty,\ \   \lambda(t)/\sqrt t\to0\ \ \ \text{as}\ t\to+\infty.$$
\end{definition}

\vskip0.3cm
Next, we  recall  a  criterion of CLT and MDP for additive functionals of  Markov process in \cite{Wu95}.

Let $(\Omega, \FF,(\FF_t)_{t\ge0}, (X_t)_{t\ge0},(\theta_t)_{t\ge0}, (\mathbb P_x)_{x\in \mathcal X})$ be a Markov process, and $(P_t)_{t\ge0}$ be associated  semigroup of Markov kernels   with $P_0(x,\cdot)=\delta_x$. We further assume that $(X_t)_{t\ge0}$ is continuous and $(P_t)_{t\ge0}$ possesses an invariant measure $\mu$. For simplicity, we write $\mathbb P_{\mu}:=\int \mathbb P_x \mu(\dif x),\ \mu(\cdot)$ is the usual integral in $L^1(\mu)$, and  $\langle \cdot,\cdot\rangle_{\mu}$ the usual inner production in $L^2(\mu)$.

Let $A=(A_t)_{t\ge0}$ be an additive functional (i.e., $ A_{t+s}=A_t+A_s\circ  \theta_t, \forall t,s\ge0$) such that $A_t$ is $\FF_t$-measurable, $\forall t\ge0$. For any measurable function $g$, let
\begin{equation}
P_t^Ag(x):=\E^x g(X_t)e^{A_t}.
\end{equation}

\begin{theorem}\cite[Theorem $2.4'$]{Wu95}\label{thm wu 95}
Assume that there exist  $t_i>0, i=1,2,3$, satisfying that
\begin{itemize}
  \item[(1)] $1$ is an isolated, simple and the only eigenvalue with modulus $1$ of $P_{t_1}$  in $L^2(\mu)$;
  \item[(2)]  $P_{t_2}$ is hyperbounded, i.e.,
  \begin{equation*}\label{eq: condition 2}
      \exists q>p>1 \text{ so that } P_{t_2}: L^p(\mu)\rightarrow L^q(\mu) \  \ \text{is bounded};
  \end{equation*}
  \item[(3)]  there exists $\delta>0$  such that
  \begin{equation*}\label{eq: condition 3}
  \sup_{0\le s\le t_3}\E_{\mu}\exp(\delta|A_s|)<\infty.
  \end{equation*}
\end{itemize}
Then  the following  limit exists
 \begin{equation*}
V(A):=\lim_{t\rightarrow \infty}\frac1t\left[\E_{\mu}A^2_t-(\E_{\mu}A_t)^2\right]<+\infty,
\end{equation*}
 and as $t\rightarrow +\infty$,
\begin{itemize}
  \item[(i)] $\mathbb P_{\nu}\left(\frac1{\sqrt t}[A_t-\E_{\mu}(A_t)]\in \cdot\right)$ converges weakly to a centered Gaussian measure $N(0, V(A))$ with variance $V(A)$,  uniformly  for the initial distributions $\nu \in U_{\mu}^p(l)$ for any $p>1, l>0$,
  \item[(ii)] $\mathbb P_{\nu}\left(\frac1{\sqrt t\lambda(t)}[A_t-\E_{\mu}(A_t)]\in \cdot\right)$ satisfies an LDP with speed $\lambda^{2}(t)$ and rate function $I(x)=x^2/(2V(A))$,  uniformly  for the initial distributions $\nu \in U_{\mu}^p(l)$ for any $p>1, l>0$,
\end{itemize}
where
$U^p_{\mu}(l)$ is defined in \eqref{e:Upl}
and $\lambda(t)$ satisfies \eqref{lambda}.

\end{theorem}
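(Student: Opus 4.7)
The plan is to combine the spectral-gap condition~(1) with the hyperboundedness condition~(2) into a quantitative spectral-gap statement on $L^2(\mu)$ that is stable under a Feynman--Kac perturbation, then read off the CLT and MDP as consequences of G\"artner--Ellis applied to the principal-eigenvalue function of the perturbed semigroup. First I would argue that~(1)--(2) imply that for $t$ large enough $P_t$ decomposes on $L^2(\mu)$ as $P_t=\Pi+R_t$, where $\Pi f=\mu(f)\cdot\mathbf 1$ and the spectral radius of $R_t$ is bounded by some $e^{-\gamma t}<1$. Hyperboundedness is what upgrades condition~(1) from a statement about eigenvalues alone to one about the full spectrum: by a Nelson--Gross--Simon-type interpolation between $L^p(\mu)$ and $L^q(\mu)$, the essential spectral radius of $P_{t_2}$ is strictly less than~$1$, so the isolated, simple eigenvalue at~$1$ is separated from the rest of the spectrum by a genuine gap.

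Next I introduce the Feynman--Kac semigroup $P_t^{zA}f(x):=\E^x[f(X_t)\,e^{zA_t}]$ for complex $z$ in a small neighborhood of $0$. Condition~(3) together with the additivity $A_{t+s}=A_t+A_s\circ\theta_t$ yields $\E_\mu \exp(\delta|A_t|)\le C e^{\kappa t}$ for all $t\ge 0$, so $z\mapsto P_t^{zA}$ is a norm-analytic operator-valued family on $L^2(\mu)$ for $|z|$ sufficiently small. Applying Kato's analytic perturbation theory around $z=0$, the isolated simple eigenvalue $1$ of $P_t$ deforms to a unique isolated simple eigenvalue of $P_t^{zA}$, which I write $e^{t\Lambda(z)}$, with $\Lambda(z)$ analytic near $0$ and $\Lambda(0)=0$. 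A Cauchy-integral computation of the corresponding spectral projection gives
\begin{equation*}
\Lambda'(0)=\lim_{t\to\infty}\frac{\E_\mu A_t}{t},\qquad
\Lambda''(0)=\lim_{t\to\infty}\frac{1}{t}\bigl(\E_\mu A_t^2-(\E_\mu A_t)^2\bigr)=:V(A)<+\infty,
\end{equation*}
which already delivers the existence of $V(A)$.

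To extract the CLT~(i) and LDP~(ii), I use the spectral decomposition $P_t^{zA}\mathbf 1=e^{t\Lambda(z)}\phi_z+\text{remainder}$ with the remainder decaying geometrically in $L^2(\mu)$, and then pair against the initial distribution $\nu$ with density $h=\dif\nu/\dif\mu\in L^p(\mu)$. Using hyperboundedness to pass from $L^2$ control to control of $\langle h,P_t^{zA}\mathbf 1\rangle_\mu$ through H\"older's inequality, I obtain
\begin{equation*}
\lim_{t\to\infty}\frac{1}{t}\log \E_\nu e^{zA_t}=\Lambda(z)
\end{equation*}
uniformly over $\nu\in U_\mu^p(l)$ for real $z$ in a neighborhood of $0$. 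The CLT~(i) follows by evaluating at $z=\zeta/\sqrt t$ and using the Taylor expansion $\Lambda(z)=\Lambda'(0)z+\tfrac12 V(A)z^2+o(z^2)$; the MDP~(ii) follows by rescaling $z\mapsto z/\lambda(t)$ and invoking the G\"artner--Ellis theorem, the quadratic rate $I(x)=x^2/(2V(A))$ coming from the same Taylor expansion. The main obstacle is the perturbation step: since $A_t$ is generally unbounded, the difference $P_t^{zA}-P_t$ cannot be treated as a small bounded operator directly. The fix is an interpolation argument: use~(3) to estimate $P_t^{zA}-P_t$ in $L^p\to L^p$ norm, use~(2) to transfer the estimate to $L^2\to L^2$, and only then invoke condition~(1) to confine the perturbed spectrum. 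Uniformity in $\nu\in U_\mu^p(l)$ is a by-product of the same interpolation.
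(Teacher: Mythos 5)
A preliminary remark: the paper itself offers no proof of this statement --- it is quoted from \cite[Theorem $2.4'$]{Wu95} and used as a black box --- so the only meaningful comparison is with Wu's original argument. Your outline does follow essentially that route: Feynman--Kac semigroup $P_t^{zA}$, perturbation of the isolated simple eigenvalue $1$ of $P_t$ on $L^2(\mu)$, Taylor expansion of the principal eigenvalue to identify $V(A)$, G\"artner--Ellis at the moderate scale, and H\"older against the density $\dif\nu/\dif\mu\in L^p(\mu)$ to get uniformity over $U^p_\mu(l)$. The strategy is the right one.

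As a proof, however, there is a genuine gap exactly at the step you flag as ``the main obstacle,'' and the fix you propose does not close it. Condition (3) is an integrability statement under the stationary law $\PP_\mu$ only; by the H\"older splitting $|P_t^{zA}f(x)|\le \left(\E^x e^{a|z||A_t|}\right)^{1/a}\left(\E^x |f(X_t)|^{a'}\right)^{1/a'}$ and stationarity it yields bounds of the type $L^{a'}(\mu)\to L^{1}(\mu)$ (with $a'>1$), but it gives no $L^p\to L^p$ bound on $P_t^{zA}-P_t$ and, in particular, does not show that $P_t^{zA}$ is even bounded, let alone norm-analytic in $z$, on $L^2(\mu)$; an unbounded $A_t$ is not a small bounded perturbation, so Kato's theorem cannot be invoked as stated. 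Likewise, condition (2) is hyperboundedness of the single operator $P_{t_2}:L^p\to L^q$ with $q>p$; it cannot ``transfer'' an $L^p\to L^p$ estimate on a different operator into an $L^2\to L^2$ one. Wu's actual mechanism is different at this point: hyperboundedness is used to show that the relevant (perturbed) operators are uniformly integrable in $L^2(\mu)$, which controls their \emph{essential} spectral radius; combined with condition (1) this isolates the principal eigenvalue of $P_t^{zA}$ and justifies the expansion of its logarithm near $z=0$. Your appeal to ``Nelson--Gross--Simon-type interpolation'' to conclude that the essential spectral radius of $P_{t_2}$ is strictly less than $1$ is likewise an assertion, not an argument (hyperboundedness does not give compactness, and plain interpolation does not give the essential-radius bound). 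Until the boundedness/analyticity of $z\mapsto P_t^{zA}$ on $L^2(\mu)$ and the confinement of its spectrum are actually established, the Cauchy-integral identification of $\Lambda'(0)$, $\Lambda''(0)$ and the subsequent G\"artner--Ellis step (including the uniform-in-$\nu$ control of the prefactor $\langle \dif\nu/\dif\mu, \cdot\rangle_\mu$) rest on unproved foundations.
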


\subsection{The proof for  Theorem \ref{thm main}}
  To prove Theorem \ref{thm main}, it suffices to verify the conditions of Theorem \ref{thm wu 95} for the stationary OU semigroup $(P_t)_{t\ge0}$ and for the additive functional $A_t:=t\cdot e_p(t)$, which is
 \begin{align}\label{eq: A}
A_t=&\int_0^t(2Q^{-1}B\xi_s -\nabla\log \rho(\xi_s)) ^T \Sigma\dif W_s   \\ \notag
& +\frac{1}{2}\int_0^t (2Q^{-1}B\xi_s -\nabla\log \rho(\xi_s) )^TQ(2Q^{-1}B\xi_s -\nabla\log \rho(\xi_s)) \dif s .
\end{align}
   These will be done in this  section by the techniques of functional inequalities, such as Poincar\'e inequality and log-Sobolev inequality, see \cite{BGL, Wang}.
\vskip0.3cm

It is well-known that  the  centered  Gaussian measure  $\mu$ on $\R^d$ with covariance matrix $\Gamma$ satisfies  {\bf Poincar\'e  inequality}
\begin{align}\label{eq: Poincare}
\int f^2\dif \mu-\left(\int f\dif \mu\right)^2&\le \int\Gamma\nabla f\cdot\nabla f\dif\mu,\\ \notag
\end{align}
and  {\bf log-Sobolev inequality}
\begin{equation}\label{eq: LSI 2}
\int f^2\log f^2\dif \mu- \int f^2\dif \mu\log\int f^2\dif \mu\le 2\int \Gamma\nabla f\cdot\nabla f\dif \mu,
\end{equation}
for every smooth function $f:\R^d\rightarrow \R$, see  \cite[Page 181, Page 258]{BGL}.

\vskip0.3cm
The generator of OU semigroup $(P_t)_{t\ge0}$ is
$$
\LL=\frac12\nabla\cdot Q\nabla+Bx\cdot\nabla\ \ \text{ with dominant } \mathcal D(\LL).
$$
For any given $t_0>0$, the time reversal $\xi^-=\{\xi_{t_0-t}\}_{0\le t\le t_0}$ of the stationary diffusion $\xi$ over the time interval $[0,t_0]$ is also stationary. Its infinitesimal generator is
$$
\LL^-=\frac12\nabla\cdot Q\nabla+(-Bx+Q\nabla \log\rho)\cdot \nabla\ \ \text{ with dominant } \mathcal D(\LL^-).
$$
See \cite[Theorem 3.3.5]{JQQ}.

When $d$-dimensional stationary OU process $\xi$ is  not reversible, the generator $\LL$ is not self-adjoint.
For $f, g\in \mathcal D (\LL)\cap L^{2}(\mu)$, consider the symmetrized Dirichlet form
$$
\mathcal E(f,g):=\frac12[\langle -\LL f, g\rangle_{\mu}+ \langle -\LL^- f, g\rangle_{\mu}].
$$
Using the formula of integration by parts,  we obtain that
\begin{align}\label{dirichlet}
\langle -\LL f, f\rangle_{\mu}=\mathcal E(f,f) =&\frac12\langle Q\nabla f,\nabla f\rangle_{\mu}\\\notag
 \ge&\frac{\lambda_{\min}(Q)}{2\lambda_{\max}(\Gamma)}\langle \Gamma\nabla f,\nabla f\rangle_{\mu},
\end{align}
  where $\lambda_{\min}(Q)$ is the minimum eigenvalue of $Q$, $\lambda_{\max}(\Gamma)$ is the maximum eigenvalue of $\Gamma$ (see \eqref{gamma}).

\vskip0.3cm

Now we verify the conditions in Theorem \ref{thm wu 95}.
\subsubsection{{\bf Condition (1).} }  Putting \eqref{eq: Poincare} and \eqref{dirichlet} together, we have
$$
\int f^2\dif \mu-\left(\int f\dif \mu\right)^2\le \frac{2\lambda_{\max}(\Gamma)}{\lambda_{\min}(Q)}\langle -\LL f, f\rangle_{\mu}.
$$
By \cite[Section 1.1]{Wang}, this is equivalent to that: for every $f:\R^d\rightarrow \R$ in $L^2(\mu)$ with $\mu(f)=0$,
$$
\int (P_tf)^2\dif \mu\le    \exp\left(-\frac{ \lambda_{\min}(Q)}{\lambda_{\max}(\Gamma)}t\right)\int f^2\dif \mu,
$$
 which implies that condition $(1)$ holds for any $t>0$.

\vskip0.5cm
\subsubsection{{\bf Condition (2).}} For reversible Markov processes,  by the celebrated Gross theory,  the log-Sobolev inequality is equivalent to the hypercontractivity of the semigroup $(P_t)_{t\ge0}$, which is stronger than hyperboundness  in Condition 2 of Theorem \ref{thm wu 95}, see \cite[Section 5.2]{BGL}. However,  this equivalent relationship does not always hold in the non-reversible case.

Fortunately,  the $d$-dimensional OU semigroup $(P_t)_{t\ge0}$ also has the the following  hypercontractivity by \cite{Fuh}.

\begin{lemma}\cite[Theorem 2.2, Proposition 4.2]{Fuh}.\label{lem hyperbound} For every $t>0$, there exist $q>p>1$ such that the  OU semigroup $(P_t)_{t\ge0}$ is bounded from $L^p(\mu)$ to $L^q(\mu)$ and
$$
\|P_tf\|_{L^q(\mu)}\le \|f\|_{L^p(\mu)}, \ \ \ \ \forall f\in L^p(\mu).
$$
\end{lemma}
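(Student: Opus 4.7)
The plan is to bypass the Gross-type argument (which requires self-adjointness of $P_t$ and fails in the non-reversible case) and instead reduce the claim to a multidimensional Nelson-type Gaussian hypercontractivity inequality via an explicit Mehler representation of $P_t$.

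First I would perform the linear change of variables $y = \Gamma^{-1/2} x$, which is an $L^p$-isometry between $(\R^d, \mu)$ and $(\R^d, \gamma)$ with $\gamma = N(0, I)$, and which conjugates $P_t$ into another OU semigroup with standard Gaussian invariant measure (new matrices $\tilde B = \Gamma^{-1/2} B \Gamma^{1/2}$, $\tilde Q = \Gamma^{-1/2} Q \Gamma^{-1/2}$). Hence one may assume $\Gamma = I$. Solving \eqref{eq: OU} explicitly yields the Mehler representation $P_t f(x) = \E[f(e^{tB} x + Z_t)]$ with $Z_t \sim N(0, \Gamma_t)$ and $\Gamma_t = \int_0^t e^{sB} Q e^{sB^T} \dif s$. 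Under stationarity the pair $(\xi_0, \xi_t)$ is jointly centered Gaussian with standard marginals and cross-covariance $M_t = e^{tB^T}$. Using the Lyapunov identity $e^{tB} e^{tB^T} + \Gamma_t = I$, one computes $M_t^T M_t = I - \Gamma_t$, so that $\|M_t\|^2 = 1 - \lambda_{\min}(\Gamma_t) < 1$ for every $t > 0$ since $Q$, hence $\Gamma_t$, is strictly positive definite by \eqref{A1}.

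Next I would invoke a non-symmetric Nelson hypercontractivity estimate: if $(X, Y)$ is a jointly standard Gaussian pair on $\R^d \times \R^d$ with cross-covariance matrix $M$ of operator norm $r < 1$, then the conditional expectation operator $T f(x) := \E[f(Y) \mid X = x]$ satisfies
\[
\|T f\|_{L^q(\gamma)} \le \|f\|_{L^p(\gamma)} \qquad\text{for every}\qquad q \le 1 + (p-1)/r^{2}.
\]
This is proven by performing the singular value decomposition $M = U D V^T$: two orthogonal rotations (which are $L^p$-isometries for $\gamma$) reduce the problem to a product of independent one-dimensional Gaussian pairs whose correlations are the singular values of $M$, and the classical one-dimensional Nelson inequality together with the tensorization of $L^p \to L^q$ bounds for product measures delivers the stated estimate. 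Applied with $M = M_t$, this yields $\|P_t f\|_{L^q(\mu)} \le \|f\|_{L^p(\mu)}$ for a suitable pair $q > p > 1$ at every $t > 0$.

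The main obstacle is the non-symmetry of $M_t$ when $B$ fails to satisfy the reversibility condition $Q^{-1} B = (Q^{-1} B)^T$: classical Nelson hypercontractivity is traditionally stated for a scalar correlation, and one must promote it to a matrix-valued, not necessarily symmetric, cross-covariance. The singular value decomposition diagonalizes this structure by distinct orthogonal rotations on the two sides (rather than by a single spectral rotation, which would require symmetry), and this is precisely the step that makes tensorization and reduction to the one-dimensional case go through in the non-reversible setting.
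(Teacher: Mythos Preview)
The paper does not prove this lemma at all; it is quoted verbatim from Fuhrman \cite{Fuh}, who establishes hypercontractivity for non-symmetric Ornstein--Uhlenbeck semigroups in (possibly infinite-dimensional) Hilbert spaces. Your argument, by contrast, is a self-contained finite-dimensional proof, and it is correct: the reduction to $\Gamma = I$ by the isometric change of variable, the Lyapunov identity $e^{tB}e^{tB^T} = I - \Gamma_t$ (which follows from $B+B^T+Q=0$ after normalization), the resulting bound $\|M_t\|^2 = 1 - \lambda_{\min}(\Gamma_t) < 1$, and the SVD reduction to a tensor product of one-dimensional Nelson operators all go through as stated. The tensorization step does require $p\le q$ (Minkowski's integral inequality), which you have. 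The key device --- diagonalizing the non-symmetric cross-covariance $M_t$ by \emph{two different} orthogonal rotations on source and target, each an $L^p(\gamma)$-isometry --- is exactly what circumvents the failure of self-adjointness that blocks the Gross argument.

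In terms of comparison: Fuhrman's result is more general (infinite-dimensional state space) and uses heavier functional-analytic machinery, whereas your route is elementary, gives the sharp relation $q-1 \le (p-1)/\|M_t\|^2$ explicitly, and makes transparent why positivity of $Q$ (condition \eqref{A1}) is precisely what forces the contraction factor below $1$. For the purposes of this paper either is sufficient, since only hyperboundedness at a single time $t_2>0$ is needed to feed into Theorem~\ref{thm wu 95}.
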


\vskip0.5cm

\subsubsection{{\bf Condition (3).}}
In this step, we prove the following stronger result, which implies Condition (3).
\begin{proposition}\label{lem integ} There exist $\delta>0,c>0$ such that for any $t>0$,
  \begin{equation}\label{eq: Cramer 1}
\frac1t\log\E_{\mu}\exp(\delta|A_t|)\le c.
  \end{equation}
\end{proposition}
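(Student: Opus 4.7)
The plan is to first rewrite $A_t$ in a simpler form using the explicit density $\rho$, then split it into a martingale and a positive quadratic additive functional, and finally reduce the exponential integrability to a Donsker--Varadhan-type variational estimate that can be controlled by a weighted Poincar\'e inequality coming from the Gaussianity of $\mu$.

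Since $\nabla \log \rho(x) = -\Gamma^{-1}x$, setting $V := 2Q^{-1}B + \Gamma^{-1}$ we can rewrite the integrand $2Q^{-1}B\xi_s - \nabla \log \rho(\xi_s) = V\xi_s$, so
\begin{equation*}
A_t = M_t + \tfrac12 F_t, \qquad M_t := \int_0^t (V\xi_s)^T \Sigma\, dW_s,\qquad F_t := \int_0^t (V\xi_s)^T Q (V\xi_s)\, ds,
\end{equation*}
with $\langle M\rangle_t = F_t$. Using $|A_t| \le |M_t| + F_t/2$ and Cauchy--Schwarz,
\begin{equation*}
\EE_\mu e^{\delta|A_t|} \le \bigl(\EE_\mu e^{2\delta M_t}+\EE_\mu e^{-2\delta M_t}\bigr)^{1/2}\bigl(\EE_\mu e^{\delta F_t}\bigr)^{1/2}.
\end{equation*}
Writing $e^{\pm 2\delta M_t}=\mcl E(\pm 2\delta M)_t\, e^{2\delta^2 F_t}$ and applying Cauchy--Schwarz once more (with the fact that the Dol\'eans--Dade exponentials $\mcl E(\pm 4\delta M)$ are martingales, since $F_t$ will be controlled in the next step), one reduces the whole estimate to showing that, for some $c,C>0$ independent of $t$,
\begin{equation*}
\EE_\mu \exp\!\left(c\int_0^t |\xi_s|^2\,ds\right) \le e^{Ct}, \qquad \forall\, t>0,
\end{equation*}
after using the trivial bound $F_t \le \|Q^{1/2}V\|_{\mathrm{op}}^2\int_0^t|\xi_s|^2\,ds$.

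To prove this, I would apply the variational representation for exponential functionals of symmetric (in Dirichlet-form sense) Markov processes:
\begin{equation*}
\frac1t \log \EE_\mu \exp\!\left(\int_0^t c|\xi_s|^2\,ds\right) \le \sup_{g:\mu(g^2)=1}\bigl\{ c\,\mu(|x|^2 g^2) - \mcl E(g,g)\bigr\},
\end{equation*}
with $\mcl E(g,g)=\tfrac12\mu(Q\nabla g\cdot\nabla g)$ as in \eqref{dirichlet}. The key step is then to prove a \emph{weighted Poincar\'e-type inequality}
\begin{equation*}
\mu(|x|^2 g^2) \le K_1\, \mcl E(g,g) + K_2\,\mu(g^2),
\end{equation*}
with explicit constants depending only on $\la_{\min}(Q),\la_{\max}(\Ga),\tr(\Ga)$. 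Granted this, choosing $c<1/K_1$ makes the variational supremum bounded by $c K_2$, yielding the desired estimate.

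The main technical obstacle is establishing this weighted inequality. I would derive it by Gaussian integration by parts against $\rho$: since $\partial_i\rho=-\sum_k(\Ga^{-1})_{ik}x_k\rho$, one gets
\begin{equation*}
\mu(x_\ell x_j g^2)=\Ga_{\ell j}\mu(g^2)+2\sum_i \Ga_{\ell i}\,\mu(x_j g\,\partial_i g),
\end{equation*}
summing over $j=\ell$ and applying Cauchy--Schwarz to $\mu(x_\ell g\,\partial_i g)$, together with $\mu(|\Ga^{1/2}\nabla g|^2)\le\la_{\max}(\Ga)\mu(|\nabla g|^2)\le \tfrac{2\la_{\max}(\Ga)}{\la_{\min}(Q)}\mcl E(g,g)$, gives the required estimate with $K_1$ of order $\la_{\max}(\Ga)^2/\la_{\min}(Q)$ and $K_2=2\tr(\Ga)$. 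Combining these ingredients yields \eqref{eq: Cramer 1}.
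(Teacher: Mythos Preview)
Your proposal is essentially correct and shares the same overall architecture as the paper's proof: decompose $A_t=M_t+\tfrac12\langle M\rangle_t$, use Cauchy--Schwarz to reduce $\EE_\mu e^{\delta|A_t|}$ to controlling $\EE_\mu e^{\eta\int_0^t|\xi_s|^2\,ds}$, and then bound the latter by estimating the Feynman--Kac semigroup $P_t^V$ with $V(x)=\eta|x|^2$.

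The genuine difference is in that last step. The paper invokes the log--Sobolev inequality \eqref{eq: LSI 2} together with Wu's lemma (Lemma~\ref{lem wu 00}), obtaining
\[
\tfrac1t\log\|P_t^V\|_2\ \le\ \tfrac{\lambda_{\min}(Q)}{4\lambda_{\max}(\Gamma)}\log\!\int e^{\,\frac{4\lambda_{\max}(\Gamma)}{\lambda_{\min}(Q)}\,\eta|x|^2}\,\mu(dx),
\]
which is finite for $\eta$ small by a direct Gaussian computation. You instead use the Rayleigh--Ritz / Lumer--Phillips bound
\[
\tfrac1t\log\|P_t^V\|_2\ \le\ \sup_{\mu(g^2)=1}\bigl\{\eta\,\mu(|x|^2g^2)-\cE(g,g)\bigr\},
\]
and then make the supremum finite by a weighted Poincar\'e inequality $\mu(|x|^2g^2)\le K_1\cE(g,g)+K_2$ derived via Gaussian integration by parts. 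Both routes are valid. Yours is more self--contained (it avoids the log--Sobolev machinery and Wu's deviation lemma) and the weighted Poincar\'e inequality is exactly the form--boundedness of $V$ relative to $\cE$ needed to justify the variational bound for the unbounded potential. The paper's route is shorter to write and yields an explicit threshold $\eta<\lambda_{\min}(Q)/(8\lambda_{\max}(\Gamma)^2)$, but relies on a nontrivial external result. One point you should make precise: the variational upper bound with the \emph{symmetrized} Dirichlet form holds for the non--reversible OU semigroup as well (this is the content of \eqref{dirichlet} together with Lumer--Phillips, or can be read off from \cite{Wu00}); your parenthetical ``symmetric (in Dirichlet--form sense)'' is a little ambiguous on this.
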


\begin{proof}
Let
$$
M_t:=\int_0^t(2Q^{-1}B\xi_s -\nabla\log \rho(\xi_s)) ^T \Sigma\dif W_s.
$$
Then the quadratic variation process  of local martingale $M_t$ is
$$
\langle M \rangle_t= \int_0^t\left[2Q^{-1}B\xi_s -\nabla\log \rho(\xi_s) \right]^T Q\left[2Q^{-1}B\xi_s -\nabla\log \rho(\xi_s)\right] \dif s,
$$
and
$$
A_t=M_t+\frac12\langle M\rangle_t.
$$
For any $s\in[0,t], \delta\in(0,1)$, by H\"older's inequality, we have
\begin{align}\label{eq: 1}
\E_{\mu}[\exp(\delta|A_s|)]&\le \E_{\mu}\left[\exp\left(\delta|M_s|+\frac \delta 2 \langle M\rangle_s\right)\right]\\\notag
&\le \sqrt{ \E_{\mu}\left[\exp\left(2\delta|M_s| \right)\right]\cdot\E_{\mu}\left[\exp\left(  \delta  \langle M\rangle_s\right)\right]}\\\notag
&\le \sqrt{ \E_{\mu}\left[\exp\left(2\delta M_s \right)+\exp\left(-2\delta M_s \right)\right]\cdot\E_{\mu}\left[\exp\left(  \delta  \langle M\rangle_s\right)\right]}.
\end{align}
Now, we assume that
\begin{align}\label{eq: Novikov}
\E_{\mu}\left[\exp\left(  2\delta   \langle M\rangle_t\right)\right]\le e^{ct}, \ \ \text{for some constant } c\in(0,+\infty).
\end{align}
By the Novikov's criterion \cite[Page 332]{RY},  the local martingales  $(\exp\left(\pm2\delta M_s-2\delta^2\langle M\rangle_s \right))_{0\le s\le t}$   are martingales. Then for all $s\in[0,t]$,
$$
\E_{\mu} [\exp\left(\pm2\delta M_s \right)]=\E_{\mu}\left[\exp\left(  2\delta^2  \langle M\rangle_s\right)\right]\le e^{ct}.
$$
  This inequality, together with inequality \eqref{eq: 1}, implies the desired result \eqref{eq: Cramer 1}.
\vskip0.3cm
Now, it remains to prove \eqref{eq: Novikov}.
Denote
$$
\tilde Q=(2Q^{-1}B+\Gamma^{-1})^T Q(2Q^{-1}B+\Gamma^{-1}).
$$
Then
$$
\langle M\rangle_t\le \lambda_{\max}(\tilde Q)\int_0^t|\xi(s)|^2\dif s,
$$
where $\lambda_{\max}(\tilde Q)$ is the maximum eigenvalue of $\tilde Q$. To prove \eqref{eq: Novikov}, it is sufficient to prove that there exists $c>0, \eta>0$ such that
\begin{align}\label{eq: integ}
\E_{\mu}\left[\exp\left(  \eta \int_0^t|\xi(s)|^2\dif s \right)\right]\le e^{ct},\ \ \ \  \forall  t\ge0.
 \end{align}

Next, we shall prove \eqref{eq: integ}. For any $\mu$-integrable function $V:\R^d\rightarrow \R$, consider the Feynman-Kac semigroup
$$
P_t^V f(x)=\E^x f(\xi_t)\cdot\exp\left(\int_0^t V(\xi_s)\dif s\right),
$$
where $f\ge0$ is Borel measurable. Define the norm
$$
\|P_t^V\|_2:=\sup\{\|P_t^Vf\|_{L^2(\mu)}; f\geq0 \text{ and } \langle f^2\rangle_{\mu}\le1 \}.
$$
Taking $f\equiv 1$, by Cauchy-Schwartz inequality,  we  obtain that
\begin{align}\label{eq: holder}
\E_{\mu}\exp\left(\int_0^t V(\xi_s)\dif s \right)\le \|P_t^V\|_2.
\end{align}

 Now, we prove \eqref{eq: integ}, based on the following result from \cite{Wu00}.

\begin{lemma}\cite[Corollary 4]{Wu00}\label{lem wu 00}
Assume the  log-Sobolev inequality holds, i.e., there exists $c>0$ such that for all $f\in \mathcal D (\LL)$,
\begin{equation*}\label{eq: LSI}
\int f^2\log f^2\dif \mu- \int f^2\dif \mu\log\int f^2\dif \mu\le c\langle -\LL f, f \rangle_{\mu}.
\end{equation*}
Then for any $V\in L^1(\mu)$,
$$
\frac1t\log\|P_t^V\|_2\le \frac1c\log\int e^{cV}\dif \mu.
$$
\end{lemma}

 Combining \eqref{eq: LSI 2} and \eqref{dirichlet}, and applying Lemma \ref{lem wu 00} and \eqref{eq: holder} to $V(x)=\eta |x|^2$ for some positive number $\eta$ (to be determined later), we have
 \begin{align*}
 \frac1t\log \E_{\mu}\left[\exp\left(\eta \int_0^t|\xi_s|^2\dif s \right) \right]
 \le &  \frac{\lambda_{\min}(Q)}{4\lambda_{\max}(\Gamma)}\log\int e^{\frac{4\lambda_{\max}(\Gamma)}{\lambda_{\min}(Q)}\eta|x|^2}\mu(\dif x).\\\notag
  \end{align*}
The above integral is finite, once
 \begin{align}\label{eq: eta}
 \eta<\frac{\lambda_{\min}(Q)}{8\lambda_{\max}^2(\Gamma)}.
 \end{align}
 Thus, the inequality \eqref{eq: integ} holds  for all $ \eta$ satisfying \eqref{eq: eta}. The proof is complete.
\end{proof}

\section{Proof of Theorem \ref{t:LIL}}

For any $t\ge 0$, we define the process
\begin{align}\label{eq: S}
S_t=t\cdot (e_p(t)-e_p).
\end{align}
As $t\rightarrow +\infty$, by the CLT in Theorem \ref{thm main}, $S_t/\sqrt{2t\log\log t} $ will go to $0$ in probability with respect to $\mathbb P$, but the convergence will not be almost sure. Before proving LIL, we shall establish some auxiliary lemmas below.

\subsection{Auxiliary lemmas}
Recall that the unique solution of \eqref{eq: OU} can be expressed as
\begin{equation}\label{eq: solu}
 \xi_t=e^{t B} \xi_0+\int_0^t e^{(t-s)B}D\dif W_s .
\end{equation}
It is a Markov process on $\R^d$ whose transition semigroup is given by
\begin{equation}\label{eq: transition}
P_tf(x):=\E [f(\xi_t)|\xi_0=x]=\int_{\R^d}f(e^{tB}x+y)d\mu_t(y),
\end{equation}
where $\{\mu_t;t\ge0\}$ is a family of centered Gaussian measures on $\R^d$ with covariance matrix
\begin{align}\label{eq: gamma t}
\Gamma_t=\int_0^{t}e^{sB}Qe^{sB^T}\dif s.
\end{align}
Notice that $\Gamma-\Gamma_t$ is positive definite.

Recall that
\begin{align*}
S_t&= \int_0^t(2Q^{-1}B\xi_s -\nabla\log \rho(\xi_s)) ^T \Sigma\dif W_s   \\ \notag
& +\int_0^t\left[\frac{1}{2}(2Q^{-1}B\xi_s -\nabla\log \rho(\xi_s) )^TQ(2Q^{-1}B\xi_s -\nabla\log \rho(\xi_s))-e_p\right] \dif s .
\end{align*}

\begin{lemma}\label{lem OU} For any $t, s>1, x\in\R^d$, there exists a constant $c>0$ such that
$$
\E[|S_{t+s}-S_s|^2|\xi_s=x]\le  c(1+t^2+t|x|^4).
$$

\end{lemma}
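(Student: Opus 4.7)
The plan is to use the Markov property to reduce the task to bounding $\E^x[S_t^2]$ starting from a deterministic point $\xi_0=x$, and then to split $S_t$ into a martingale and a centered drift and handle the two pieces separately. Since $\nabla\log\rho(y)=-\Gamma^{-1}y$, both integrands in \eqref{eq: A} collapse to the single linear map $y\mapsto Ny$ with $N:=2Q^{-1}B+\Gamma^{-1}$. Time-homogeneity then reduces the claim to $\E^x[S_t^2]\le c(1+t^2+t|x|^4)$, where
\[
S_t=M_t+D_t,\quad M_t:=\int_0^t(N\xi_u)^T\Sigma\dif W_u,\quad D_t:=\int_0^t\Bigl[\tfrac12\xi_u^T N^TQN\xi_u-e_p\Bigr]\dif u.
\]

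The martingale contribution is routine. It\^o's isometry gives $\E^x[M_t^2]=\int_0^t\E^x[(N\xi_u)^TQ(N\xi_u)]\dif u$, and the representation \eqref{eq: solu} together with \eqref{eq: gamma t} yields the pointwise bound $\E^x|\xi_u|^2\le C(1+|x|^2)$, so $\E^x[M_t^2]\le Ct(1+|x|^2)\le C(t+t|x|^4)$, which already fits inside the target.

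The delicate step is controlling $\E^x[D_t^2]$: a direct Cauchy--Schwarz in time would produce a forbidden $t^2|x|^4$ term, so I must use decorrelation. Set $f(y):=\tfrac12 y^T N^T Q N y - e_p$; by \eqref{eq: epr finite} one has $\mu(f)=0$. Since $P_r$ preserves the class of quadratic polynomials, the explicit kernel \eqref{eq: transition} yields
\[
P_r f(x)=\tfrac12(e^{rB}x)^T N^T Q N(e^{rB}x)-\tfrac12\tr\!\bigl(N^T Q N(\Gamma-\Gamma_r)\bigr),
\]
and by \eqref{A2} there exist $\kappa,C>0$ with $\|e^{rB}\|\le Ce^{-\kappa r}$, hence $\|\Gamma-\Gamma_r\|\le Ce^{-2\kappa r}$, giving $|P_rf(x)|\le Ce^{-2\kappa r}(1+|x|^2)$. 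Combining this with the Markov property,
\[
|\E^x[f(\xi_u)f(\xi_v)]|=|\E^x[f(\xi_v)P_{u-v}f(\xi_v)]|\le Ce^{-2\kappa(u-v)}(1+|x|^4),
\]
after also using $|f(y)|\le C(1+|y|^2)$ and $\E^x|\xi_v|^4\le C(1+|x|^4)$. Integrating the symmetric expansion $\E^x[D_t^2]=2\int_0^t\!\!\int_0^u \E^x[f(\xi_u)f(\xi_v)]\dif v\dif u$ then produces $\E^x[D_t^2]\le Ct(1+|x|^4)$, and the elementary $S_t^2\le 2M_t^2+2D_t^2$ closes the argument.

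The main obstacle is the drift term, and the technical heart of the argument is the explicit computation of $P_rf$ with the ensuing exponential decay; both ultimately rest on the hyperbolicity hypothesis \eqref{A2} that all eigenvalues of $B$ have negative real parts.
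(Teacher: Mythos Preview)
Your argument is correct, but it takes a more elaborate route than the paper's. Both proofs reduce via the Markov property to bounding $\E^x[S_t^2]$ and split $S_t=M_t+D_t$; the martingale part is handled identically. The divergence is in the drift term $D_t$. You exploit decorrelation: you compute $P_rf$ explicitly, extract the exponential bound $|P_rf(y)|\le Ce^{-2\kappa r}(1+|y|^2)$, and integrate the two-time correlation $\E^x[f(\xi_u)f(\xi_v)]$ to obtain $\E^x[D_t^2]\le Ct(1+|x|^4)$. The paper instead applies the crude Cauchy--Schwarz in time that you feared, $\E^x[D_t^2]\le t\int_0^t\E^x[f(\xi_s)^2]\,\dif s$, but then uses the pointwise moment decay $\E^x|\xi_s|^4\le c\bigl(e^{4\lambda_0(B)s}|x|^4+1\bigr)$ so that $\int_0^t\E^x|\xi_s|^4\,\dif s\le c(|x|^4+t)$; the outer factor of $t$ then yields $ct|x|^4+ct^2$, not the ``forbidden'' $t^2|x|^4$. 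In other words, the exponential stability of $B$ that you feed into $P_rf$ the paper feeds directly into the one-time fourth moment, making the argument shorter. Your approach buys a sharper estimate on $D_t$ (linear rather than quadratic in $t$), which is not needed for the stated lemma but would matter if one wanted a bound of the form $ct(1+|x|^4)$ on $\E^x[S_t^2]$ itself.
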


\begin{proof}
By the Markov property of $\xi$, it suffices to prove the lemma for $s=0$.
By \eqref{eq: transition}, as $\xi_0=x$, the law of  $\xi_t$  is the  Gaussian measure on $\R^d$ with mean $e^{tB}x$ and  covariance matrix $\Gamma_t$.  Then the next  inequalities follow immediately from the properties of the multivariate Gaussian distribution,
$$
\E\left[|\xi_t|^2|\xi_0=x \right]\le e^{2\lambda_0(B)t}|x|^2+d\lambda_{\max}(\Gamma),
$$
and
$$
\E\left[|\xi_t|^4|\xi_0=x \right]\le c\left( e^{4\lambda_0(B)t}|x|^4+d^2\lambda^2_{\max}(\Gamma)\right),
$$
where $\lambda_0(B)=\max\{\text{the real part of all eigenvalues of } B\}<0$ by \eqref{A2}, $\lambda_{\max}(\Gamma)$ is the maximum principle eigenvalue of $\Gamma$.
By the above inequalities and  H\"older's inequality,  there exists a constant $c>0$ such that
\begin{align*}
&\E[S_t^2|\xi_0=x]\\ \notag
\le& 2 \E\left[ \left(\int_0^t(2Q^{-1}B\xi_s -\nabla\log \rho(\xi_s)) ^T \Sigma\dif W_s\right)^2\bigg|\xi_0=x\right]  \\ \notag
+&2\E \left\{\left[\int_0^t\left(\frac{1}{2}(2Q^{-1}B\xi_s -\nabla\log \rho(\xi_s) )^TQ(2Q^{-1}B\xi_s -\nabla\log \rho(\xi_s))-e_p\right) \dif s\right]^2\bigg|\xi_0=x\right\}\\ \notag
\le &2 \E\left[ \int_0^t(2Q^{-1}B\xi_s -\nabla\log \rho(\xi_s)) ^TQ(2Q^{-1}B\xi_s -\nabla\log \rho(\xi_s))\dif s|\xi_0=x\right]\\ \notag
+& 2t\E\left[\int_0^t\left(\frac{1}{2}(2Q^{-1}B\xi_s -\nabla\log \rho(\xi_s) )^TQ(2Q^{-1}B\xi_s -\nabla\log \rho(\xi_s))-e_p\right)^2 \dif s|\xi_0=x\right]\\
\le &c \E\left[\int_0^t (1+|\xi_s|^2)  \dif s|\xi_0=x\right]+ct\E\left[\int_0^t (1 +|\xi_s|^4)\dif s|\xi_0=x\right]\\ \notag
\le & c(1+t^2+t|x|^4).
 \end{align*}
\end{proof}

\begin{lemma}\label{lem S} For any $p\ge2$, there exists a constant $C_p\in(0,+\infty)$ such that
$$
\E\left[\sup_{n\le t\le n+1}|S_t-S_n|^p\right]\le C_p, \ \ \ \forall n\in\N.
$$
\end{lemma}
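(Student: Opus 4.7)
The plan is to decompose $S_t - S_n$ into its martingale part and its bounded-variation (drift) part, and bound each one separately using the Burkholder--Davis--Gundy (BDG) inequality together with the fact that, under $\mathbb P$, the process $\xi$ is stationary with marginal $\mu = N(0,\Gamma)$, so $\E|\xi_s|^q$ is constant in $s$ and finite for every $q\ge 1$.

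Concretely, since $\nabla\log\rho(x) = -\Gamma^{-1}x$, the vector $h(x) := 2Q^{-1}Bx - \nabla\log\rho(x) = (2Q^{-1}B+\Gamma^{-1})x$ is linear in $x$, so $|h(x)|\le c|x|$. Writing
$$
S_t - S_n = \bigl(M_t - M_n\bigr) + \int_n^t V(\xi_s)\,\dif s, \qquad V(x) := \tfrac12 h(x)^T Q h(x) - e_p,
$$
with $|V(x)| \le c(1+|x|^2)$, I would first handle the drift part by Jensen's inequality applied to the uniform measure on $[n,n+1]$:
$$
\sup_{n\le t\le n+1}\Bigl|\int_n^t V(\xi_s)\,\dif s\Bigr|^p \le \Bigl(\int_n^{n+1}|V(\xi_s)|\,\dif s\Bigr)^p \le \int_n^{n+1}|V(\xi_s)|^p\,\dif s,
$$
and then take expectations, using stationarity to get $\E\int_n^{n+1}|V(\xi_s)|^p\,\dif s \le c(1 + \E_\mu|\xi|^{2p}) < \infty$.

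For the martingale part, I would apply BDG to the shifted continuous martingale $(M_{n+u}-M_n)_{u\ge 0}$:
$$
\E\sup_{n\le t\le n+1}|M_t - M_n|^p \le C_p\,\E\bigl(\langle M\rangle_{n+1} - \langle M\rangle_n\bigr)^{p/2},
$$
and bound $\langle M\rangle_{n+1}-\langle M\rangle_n = \int_n^{n+1} h(\xi_s)^T Q h(\xi_s)\,\dif s \le c\int_n^{n+1}|\xi_s|^2\,\dif s$. Jensen's inequality on the normalized interval together with stationarity then gives
$$
\E\Bigl(\int_n^{n+1}|\xi_s|^2\,\dif s\Bigr)^{p/2} \le \int_n^{n+1}\E|\xi_s|^p\,\dif s = \E_\mu|\xi|^p < \infty.
$$
Adding the two bounds with $|a+b|^p \le 2^{p-1}(|a|^p+|b|^p)$ yields a constant $C_p$ independent of $n$, as desired.

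There is no real obstacle here: the whole argument rests on stationarity of $\xi$ under $\mathbb P_\mu$ (which forces the bounds to be uniform in $n$), the polynomial growth of the integrands in $|\xi_s|$, and the finiteness of all Gaussian moments. The only point requiring a little care is the routine BDG application to control the supremum of the stochastic integral over $[n,n+1]$ rather than just its value at the endpoint, but this is precisely what BDG is designed for.
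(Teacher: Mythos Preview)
Your proposal is correct and follows essentially the same route as the paper: split $S_t-S_n$ into the stochastic integral and the drift, control the former by the Burkholder--Davis--Gundy inequality and the latter by Jensen/H\"older on the unit interval, and then invoke the finiteness of all Gaussian moments of $\xi_s$ under $\mathbb P_\mu$. The only cosmetic difference is that the paper first reduces to $n=0$ via the Markov property (and stationarity) and then works on $[0,1]$, whereas you keep $n$ general and appeal to stationarity directly to obtain uniformity in $n$; these are equivalent.
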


\begin{proof}By the Markov property of process $\xi$, we only need to prove this lemma for $n=0$. For any $p\ge2,t\in(0,1)$, there exists a constant $c_p>0$ such that
\begin{align*}
|S_t|^p&\le c_p\left| \int_0^t(2Q^{-1}B\xi_s -\nabla\log \rho(\xi_s)) ^T \Sigma\dif W_s\right|^p   \\ \notag
& +c_p\left|\int_0^t\left[\frac{1}{2}(2Q^{-1}B\xi_s -\nabla\log \rho(\xi_s) )^TQ(2Q^{-1}B\xi_s -\nabla\log \rho(\xi_s))-e_p\right] \dif s \right|^p.
\end{align*}
Taking the supremum up to time $1$ and the expectation, by Burkholder-Davis-Gundy's inequality \cite[Theorem 5.2.4]{DPZ} and H\"older's inequality,  we have

\begin{align*}
&\E\left(\sup_{0\le t\le 1}|S_t|^p\right)\\
\le& 2^{p-1}\E\left(\sup_{0\le t\le 1}\left| \int_0^t(2Q^{-1}B\xi_s -\nabla\log \rho(\xi_s)) ^T \Sigma\dif W_s\right|^p\right)\\
 &+2^{p-1}\E\left(\int_0^1\left|\frac{1}{2}(2Q^{-1}B\xi_s -\nabla\log \rho(\xi_s) )^TQ(2Q^{-1}B\xi_s -\nabla\log \rho(\xi_s))-e_p\right| \dif s \right)^p\\ \notag
\le& c_p\E\left(\int_0^1 (2Q^{-1}B\xi_s -\nabla\log \rho(\xi_s) )^TQ(2Q^{-1}B\xi_s -\nabla\log \rho(\xi_s)) \dif s \right)^{\frac p2}\\ \notag
&+2^{p-1}\E\left(\int_0^1\left|\frac{1}{2}(2Q^{-1}B\xi_s -\nabla\log \rho(\xi_s) )^TQ(2Q^{-1}B\xi_s -\nabla\log \rho(\xi_s))-e_p\right|^p \dif s \right)\\ \notag
\le& c_p \int_0^1\E\left| (2Q^{-1}B\xi_s -\nabla\log \rho(\xi_s) )^TQ(2Q^{-1}B\xi_s -\nabla\log \rho(\xi_s))\right|^{\frac p2} \dif s  \\ \notag
&+2^{p-1} \int_0^1\E\left|\frac{1}{2}(2Q^{-1}B\xi_s -\nabla\log \rho(\xi_s) )^TQ(2Q^{-1}B\xi_s -\nabla\log \rho(\xi_s))-e_p\right|^p \dif s. \\ \notag
 \end{align*}
Since  the $p$-moments  of  Gaussian random variables $\{\xi_s\}_{s\in[0,1]}$ are uniformly  finite, the most right hand side of above inequality  is finite.
\end{proof}

\subsection{Proof of  Theorem \ref{t:LIL}}The proof is  largely inspirited  by the classical proof of LIL for i.i.d. random variable sequence, see \cite[Sectoin 7.5]{Chung}, and some key estimates are heavily based on the MDP results in Theorem \ref{thm main}.

\begin{proof}[Proof of  Theorem \ref{t:LIL}] We shall only prove \eqref{1}, and the proof of \eqref{2} is very similar, which is  omitted here.   The proof is divided into two steps: upper bound and lower bound. Both of steps are heavily based on the MDP results in Theorem \ref{thm main}.

\vskip0.3cm
Step 1. ({\bf Upper bound}). First, we prove the upper bound
\begin{align}\label{upper}
\limsup_{t\rightarrow +\infty}\sqrt{\frac{t}{ 2\log\log t}} \left( e_p(t)-  e_p\right)\le \sigma, \ \  a.s..
\end{align}

  {\it Step 1.1. Upper bound  of the sequence $S_{n_k}, n_k=[\gamma^k], k\ge1$ for any given $\gamma>1$.}  Sample $S_t$ along a discrete sequence $n_k=[\gamma^k]$ for any given $\gamma>1$, and prove the upper bound  for this sequence in this step.

    For any $\theta>0$, let
\begin{align}\label{E}
E_{n,\theta}:=\left\{S_n\ge \theta \sigma \sqrt{2n\log\log n}\right\}.
\end{align}
    Applying Theorem \ref{thm main} to $\lambda(n)=\log\log n$, we have that for any $\theta >1,0<\e<\theta-1$, for any $k$ large enough,
\begin{align*}
\mathbb P\left(E_{n_k,\theta}\right)\le \exp\{- (\theta^2-\e)\log\log n_k\}.
\end{align*}
Consequently, as $\log\log n_k\sim \log k$,  we have
 \begin{align}\label{eq: finite}
\sum_{k}\mathbb P\left(E_{n_k,\theta}\right)<+\infty.
\end{align}
By the Borel-Cantelli lemma and the arbitrariness of $\theta>1$, we obtain
\begin{align}
\limsup_{k\rightarrow +\infty}\frac{S_{n_k}}{\sqrt{2n_k\log\log n_k}}\le \sigma\ \ \ a.s..
\end{align}

\vskip0.3cm

   {\it Step 1.2.  Comparison between the sequence $\{S_n\}_{n\ge1}$   and the  sequence $\{S_{n_k}\}_{k\ge1}$}.  We need to estimate the probability $ \mathbb P(\bigcup_{j=n_{k}+1}^{n_{k+1}}E_{j,\theta\gamma})$. For any integer $j\in (n_k, n_{k+1}]$, define
\begin{align}
F_j:=\left\{ |S_{n_{k+1}}-S_j|\le   \sqrt{ n_{k+1}\log\log k}\sigma \right\}.
\end{align}

Then, for any $k$ large enough,
\begin{align}\label{eq: I0}
E_{n_{k+1}, \theta}&\supseteq \bigcup_{j=n_k+1}^{n_{k+1}}  E_{j,\theta\gamma}F_j\\ \notag
&=\bigcup_{j=n_k+1}^{n_{k+1}}\left[ (E_{n_{k}+1, \theta\gamma}F_{n_{k}+1})^c\cdots (E_{j-1, \theta\gamma}F_{j-1})^c (E_{j, \theta\gamma}F_{j})\right]\\ \notag
&\supseteq \bigcup_{j=n_k+1}^{n_{k+1}}\left[ E_{n_{k}+1, \theta\gamma}^c\cdots E_{j-1, \theta\gamma}^c E_{j, \theta\gamma}F_{j}\right].\\ \notag
\end{align}

Now, let us estimate the probability $ \mathbb P \left(E_{n_{k}+1, \theta\gamma}^c\cdots E_{j-1, \theta\gamma}^c E_{j, \theta\gamma}F_{j}\right)$ by the Markov property and the  moderate deviations result in Theorem \ref{thm main}.

Let $\FF_t^{\xi}:=\sigma\{\xi_s; 0\le s\le t\}$. By the Markov property of Ornstein-Uhlenbeck process $\xi$, for any $ j\in(n_k, n_{k+1}]$, we have
\begin{align}\label{eq: I}
\PP(E_{n_k+1,\theta\gamma}^c\cdots E_{j-1,\theta\gamma}^c E_{j,\theta\gamma}F_j)&=\int_{E_{n_k+1,\theta\gamma}^c\cdots E_{j-1,\theta\gamma}^cE_{j,\theta\gamma}}\PP(F_j|\FF_j^{\xi})\dif \PP\\ \notag
&=\int_{E_{n_k+1,\theta\gamma}^c\cdots E_{j-1,\theta\gamma}^cE_{j,\theta\gamma}} \PP(F_j|\xi_j)\dif \PP \\ \notag
&\ge\int_{E_{n_k+1,\theta\gamma}^c\cdots E_{j-1,\theta\gamma}^cE_{j,\theta\gamma}}\PP(F_j|\xi_j)1_{[|\xi_j|<j^{\frac18 }]}\dif \PP.
\end{align}
When $|\xi_j|<j^{\frac18}$, for any $j\in[n_{k+1}-\gamma^{\frac{1+k}{4}}, n_{k+1}]$, by Chebychev's inequality and Lemma \ref{lem OU}, we have
\begin{align}\label{eq: I1}
\PP(F_j^c|\xi_j)=&\PP\left(|S_{n_{k+1}}-S_{j}|\ge   \sqrt{n_{k+1}\log\log k}\sigma \Big|\xi_j    \right)\\ \notag
\le & ( \sigma^2 n_{k+1}\log\log k)^{-1} \E\left(|S_{n_{k+1}}-S_{j}|^2\Big|\xi_j  \right)\\ \notag
\le & c(\sigma^2 n_{k+1}\log\log k)^{-1}(1+\gamma^{\frac{k+1}{2}}+j^{\frac12}\gamma^{\frac{k+1}{4}}).\\ \notag
 \end{align}
When $|\xi_j|<j^{\frac18}$,  $j\in(n_k, n_{k+1}-\gamma^{\frac{1+k}{4}}]$, by Chebychev's inequality and Lemma \ref{lem OU} again,
\begin{align}\label{eq: I2}
&\PP(F_j^c|\xi_j)=\PP\left(\left|S_{n_{k+1}}-S_{j}\right|\ge  \sqrt{n_{k+1}\log\log k}\sigma \Big|\xi_j    \right)\\ \notag
\le & \PP\left(\left|S_{ j+\gamma^{(k+1)/8}}-S_j\right|\ge \frac{\sigma }{2}\sqrt{n_{k+1}\log\log k}\Big|\xi_j    \right)+\PP\left(\left|S_{n_{k+1}}-S_{ j+\gamma^{(k+1)/8}} \right|\ge \frac{\sigma}{2} \sqrt{n_{k+1}\log\log k}\Big|\xi_j    \right)\\ \notag
\le & c( \sigma^2 n_{k+1}\log\log k)^{-1}(1+\gamma^{\frac{k+1}{4}}+j^{\frac12 }\gamma^{\frac{k+1}{8}})+\PP\left(\left|S_{n_{k+1}}-S_{ j+\gamma^{(k+1)/8}} \right|\ge \frac{ \sigma}{2}\sqrt{n_{k+1}\log\log k} \Big|\xi_j    \right).
\end{align}
 Next, we shall use the moderate deviations result to estimate the last term.  For any $|x|<j^{\frac 18},t=\gamma^{(k+1)/8}$, the transition probability $P(t; x,\cdot)$ is absolutely continuous with respect to the invariant measure $\mu$, and the Radon-Nikodym derivative
 \begin{align*}
 f:=\frac{\dif P(t;x, \cdot)}{\dif \mu}\in L^2(\mu) \text{ and } \|f\|_{L^2(\mu)} \le l \ \ \  \text{for some } l>0.
  \end{align*}For any $|\xi_j|<j^{\frac{1 }{8}}$, applying the MDP in Theorem \ref{thm main}, 
   we obtain that for any $j\in [n_k, n_{k+1}-\gamma^{\frac{k+1}{8}}]$ large enough
 \begin{align} \label{eq: I3}
 &\PP\left(\left|S_{n_{k+1}}-S_{ j+\gamma^{(k+1)/8}} \right|\ge   \sqrt{n_{k+1}\log\log k}\sigma/2 \Big|\xi_j    \right)\\ \notag
 =&\E\left[\PP_{\xi_{j+\gamma^{(k+1)/8}}}\left(\left|S_{n_{k+1}- j-\gamma^{(k+1)/8}} \right|\ge  \sqrt{n_{k+1}\log \log k}\sigma/2\right)\Big|\xi_j\right]\\ \notag
 \le &  \exp\left\{-\frac{ n_{k+1}\log\log k}{16(n_{k+1}-j-\gamma^{(k+1)/8})}  \right\}.
   \end{align}
By \eqref{eq: I1}-\eqref{eq: I3}, for any $|\xi_j|<j^{\frac{1 }{8}}$ and for any  $j$ large enough,
$$\PP(F_j^c|\xi_j)\le \frac12.$$
 This, together with \eqref{eq: I}, implies that for any $k$ large enough,
\begin{align*}
\PP(E_{n_k+1,\theta\gamma}^c\cdots E_{j-1,\theta\gamma}^c E_{j,\theta\gamma}F_j)
&\ge\frac12\PP\left( E_{n_k+1,\theta\gamma}^c\cdots E_{j-1,\theta\gamma}^cE_{j,\theta\gamma}\cap\{|\xi_j|<j^{\frac18 }\}\right).
\end{align*}
Then
\begin{align*}
 &\PP(E_{n_k+1,\theta\gamma}^c\cdots E_{j-1,\theta\gamma}^c E_{j,\theta\gamma})\\ \notag
 \le&\PP\left( E_{n_k+1,\theta\gamma}^c\cdots E_{j-1,\theta\gamma}^cE_{j,\theta\gamma}\cap\{|\xi_j|<j^{\frac18 }\}\right)+\PP\left(|\xi_j|\ge j^{\frac{1}{8}} \right)\\ \notag
 \le &  2\PP(E_{n_k+1,\theta\gamma}^c\cdots E_{j-1,\theta\gamma}^c E_{j,\theta\gamma}F_j)+\PP\left(|\xi_j|\ge j^{\frac{1}{8}} \right).
\end{align*}
 First taking sum of $j$ over $(n_k+1, n_{k+1}]$, and then taking sum of $k$ over $\mathbb N$,  by \eqref{eq: I0}, we have
 $$
 \sum_{k\ge1}\PP\left(\bigcup_{j=n_{k}+1}^{n_{k+1}}E_{j,\theta\gamma} \right)\le 2\sum_{k\ge1}\PP\left(  E_{n_{k+1},\theta }\right)+\sum_{k\ge1}\sum_{j=n_{k}+1}^{n_{k+1}}\PP(|\xi_j|\ge j^{\frac{1}{8}}), $$
 which is finite by \eqref{eq: finite} and the fact that the law of $\xi_n$ is identical to the invariant Gaussian measure $\mu$. Consequently by the Borel-Cantelli lemma, we have
 \begin{align*}
 \PP\left(\bigcup_{j=n_{k}+1}^{n_{k+1}}E_{j,\theta\gamma} \text{ i.o.} \right)=0,
 \end{align*}
 which is equivalent to that
  \begin{align*}
 \PP\left( E_{n,\theta\gamma} \text{ i.o.} \right)=0.
 \end{align*}
 By the arbitrariness of $\theta, \gamma\in(1,+\infty)$, we have
 \begin{align*}
\limsup_{n\rightarrow +\infty}\frac{S_{n}}{\sqrt{2n\log\log n}}\le \sigma\ \ \ a.s..
\end{align*}

 \vskip0.3cm

    {\it Step 1.3.  Comparison between the sequence $\{S_t\}_{t\ge1, t\in\R}$  and the  sequence $\{S_{n}\}_{n\ge1,n\in \N}$.}
For any $\e>0, n\in\N$, by Lemma \ref{lem S} and Chebyshev's inequality, we have
 \begin{align*}
&\PP\left(\sup_{n<t\le n+1}\frac{|S_t-S_n|}{\sqrt{2n\log\log n}} >\e\right)\\ \notag
\le &(2\e^2n\log\log n)^{-2}   \E\left[\sup_{n<t\le n+1} |S_t-S_n|\right]^4\\ \notag
\le &  (2\e^2 n\log\log n)^{-2} c_4.
 \end{align*}
Consequently, we have
$$
\sum_{n\in \N}\PP\left(\sup_{n<t\le n+1}\frac{|S_t-S_n|}{\sqrt{2n\log\log n}} >\e\right)<+\infty.
$$
By the Borel-Cantelli lemma, we obtain
\begin{align*}
\limsup_{n\rightarrow +\infty} \sup_{n<t\le n+1}\frac{|S_t-S_n|}{\sqrt{2n\log\log n}} \le\e  \ \ a.s..
\end{align*}
Due to the arbitrariness of $\e>0$,
$$
\limsup_{\R\ni t\rightarrow +\infty}\frac{S_t}{\sqrt{2t\log\log t}} \le
\limsup_{\N \ni n\rightarrow +\infty}\frac{S_n}{\sqrt{2n\log\log n}} \ \ \ \  \ a.s.,
$$
which implies the upper bound \eqref{upper} holds.

\vskip0.5cm

Step 2. ({\bf  Lower bound}). In this step, we shall prove that:   the sequence  $\{S_{n_k}\}_{k\ge1}$, where $n_k=\rho^k$ for any given $\rho>4$, satisfies that
\begin{align}\label{lower}
\limsup_{ k\rightarrow +\infty}\frac{S_{n_k}}{\sqrt{2n_k\log\log n_k}} \ge (1-1/\rho-1/\sqrt \rho)\sigma, \ \  a.s..
\end{align}
By the arbitrariness of $\rho>4$, we have
$$
\limsup_{ n\rightarrow +\infty}\frac{S_{n}}{\sqrt{2n\log\log n}} \ge \sigma\ \ \ a.s.
$$

For any $\theta\in(0,1)$,  let
$$
G_{k, \theta}:=\{S_{n_{k}}-S_{n_{k-1}+k}\ge \theta\sigma\sqrt{2n_k\log\log n_k}\}
$$
and
$$
H_{k, \theta}:=\{|S_{n_{k-1}+k}|\ge \theta \sigma\sqrt{2n_k\log\log n_k} \}.
$$
Recall  $E_{n, \theta}$ defined in  \eqref{E}. For any $k\ge1$, $1/\sqrt \rho<\theta_2\le \theta_1<1-1/\rho$, we have
\begin{align*}
G_{k, \theta_1}\subset&\left\{ S_{n_{k}}-S_{n_{k-1}+k}\ge \theta_1\sigma\sqrt{2n_k\log\log n_k}, |S_{n_{k-1}+k}|< \theta_2 \sigma\sqrt{2n_k\log\log n_k}  \right\}\\
&\cup\left\{|S_{n_{k-1}+k}|\ge \theta_2 \sigma\sqrt{2n_k\log\log n_k}  \right\}\\
\subset& E_{n_k, \theta_1-\theta_2}\cup H_{k, \theta_2}.
\end{align*}
Taking the union of the terms from $k$ to $\infty$, we have
\begin{align*}
\bigcup_{j=k}^{\infty}G_{j, \theta_1}\subset  \left(\bigcup_{j=k}^{\infty}E_{n_j, \theta_1-\theta_2}\right) \bigcup \left(\bigcup_{j=k}^{\infty} H_{j, \theta_2}\right).
\end{align*}
Then
\begin{align}\label{eq: III}
\PP\left(\bigcup_{j=k}^{\infty}G_{j, \theta_1}\right)\le  \PP\left(\bigcup_{j=k}^{\infty}E_{n_j, \theta_1-\theta_2} \right)+\sum_{j=k}^{\infty}\PP\left(H_{j, \theta_2} \right).
\end{align}
Now we estimate those probabilities.

  {\it Step 2.1.  Estimate of $\PP\left(\bigcup_{j=k}^{\infty}G_{j, \theta_1}\right)$}.   For any $m> k$, by the Markov property of $\xi$, we have
\begin{align}\label{eq: II}
\PP\left(\bigcap _{j=k}^m  G_{j, \theta_1}^c\right)=&\E\left[ \PP\left(\bigcap _{j=k}^m  G_{j, \theta_1}^c|\FF^{\xi}_{n_{m-1}}\right) \right]=\int_{\bigcap _{j=k}^{m-1}  G_{j, \theta_1}^c}\PP\left(   G_{m, \theta_1}^c|\xi_{n_{m-1}}\right)\dif \PP\\ \notag
\le &\PP\left(|\xi_{n_{m-1}}|>m \right)+\int_{\bigcap _{j=k}^{m-1}  G_{j, \theta_1}^c}\PP\left(   G_{m, \theta_1}^c|\xi_{n_{m-1}}\right)1_{[|\xi_{n_{m-1}}|\le m]}\dif \PP.\\ \notag
\end{align}
Since the law of  stationary process $\xi_n$ is identical to the Gaussian measure $\mu$ defined in \eqref{mu}, there exist constants $c_1>0, c_2>0$ such that
\begin{align}\label{eq: II1}
\PP\left(|\xi_{n_{m-1}}|>m \right)\le c_1\exp\{-c_2m^2\} \ \ \ \text{for all } m\in \N.
\end{align}

 Next, we shall use the moderate deviations result to estimate the last term in \eqref{eq: II}.  For any  $|x|\le m$, the transition probability $P(m;x, \cdot)$ is absolutely continuous with respect to the invariant measure $\mu$, and the Radon-Nikodym derivative
 \begin{align*}
 f:=\frac{\dif P(m;x,\cdot)}{\dif \mu}\in L^2(\mu) \text{ and } \|f\|_{L^2(\mu)} \le l \ \ \  \text{for some } l>0.
  \end{align*}
 For any $|\xi_{n_{m-1}}|\le m$, applying the MDP in Theorem \ref{thm main},  we obtain that for any $m$ large enough
 \begin{align*}
 &\PP\left(   G_{m, \theta_1} \big|\xi_{n_{m-1}}\right)\\ \notag
 =&\PP\left(S_{n_{m }}-S_{n_{m-1}+m}\ge \theta_1\sigma\sqrt{2n_m\log\log n_m}  \big|\xi_{n_{m-1}}    \right)\\ \notag
 =&\E\left[\PP_{\xi_{n_{m-1}+m  }}\left(S_{n_{m}-n_{m-1}-m}\ge \theta_1\sigma\sqrt{2n_m\log\log n_m}\right)\big|\xi_{n_{m-1}} \right]\\ \notag
 \ge &  \exp\left\{ -\frac{\theta_1 n_m\log\log n_m}{n_{m}-n_{m-1}-m}  \right\}.\\ \notag
  \end{align*}
For any $m$ large enough, $\log\log n_m=\log\log \rho^m\sim \log m$, then the above inequality implies that
\begin{align} \label{eq: II2}
\PP\left(   G_{m, \theta_1}^c |\xi_{n_{m-1}}\right)\le 1- m^{-\frac{\theta_1}{1-1/\rho}}\ \ \ \ \text{for all } |\xi_{n_{m-1}}|\le m .
 \end{align}
Putting \eqref{eq: II}-\eqref{eq: II2} together and iterating this procedure,  we obtain that for any $m>k$ large enough,
\begin{align*}
\PP\left(\bigcap _{j=k}^m  G_{j, \theta_1}^c\right)\le \prod_{j=k}^m( 1-  j^{-\frac{\theta_1}{1-1/\rho}})
+\sum_{j=k}^m c_1\exp\{-c_2j^2\}.
\end{align*}
 Since $\theta_1/(1-1/\rho)<1$, we have
\begin{align*}
\PP\left(\bigcup _{j=k}^{\infty}  G_{j, \theta_1}\right)\ge& 1-\prod_{j=k}^{\infty}( 1-  j^{-\frac{\theta_1}{1-1/\rho}})
-\sum_{j=k}^{\infty} c_1\exp\{-c_2j^2\}\\
=&1-\sum_{j=k}^{\infty} c_1\exp\{-c_2j^2\}.
\end{align*}
For any $n\ge1$,
\begin{align}\label{eq: III1}
\PP\left(\bigcup _{j=n}^{\infty}  G_{j, \theta_1}\right)\ge
\lim_{k\rightarrow \infty}\PP\left(\bigcup _{j=k}^{\infty}  G_{j, \theta_1}\right)
\ge \lim_{k\rightarrow\infty}\left(1- \sum_{j=k}^{\infty} c_1\exp\{-c_2j^2\}\right)
=1.
\end{align}
\vskip0.3cm
  {\it Step 2.2. Estimate of $\PP\left(H_{j, \theta_2} \right)$}. Applying the MDP in Theorem \ref{thm main} to $\lambda(n)= \log\log n$, for any given $\e\in (0, \theta^2-1/\rho)$, it holds that for any $k$ large enough
\begin{align*}
\PP\left(H_{k, \theta_2} \right)=&\PP\left( |S_{n_{k-1}+k}|\ge \theta_2 \sigma\sqrt{2n_k\log\log n_k}   \right)\\
\le& \exp\left\{ -\frac{(\theta_2^2-\e)n_k\log\log n_k}{(n_{k-1}+k)}     \right\}\\
\sim & \exp\left\{ -(\theta_2^2-\e) \rho \log k \right\} \ \ \text{for large  } k,
\end{align*}
where we have used the facts   $\log\log n_k=\log\log \rho^k\sim \log k$ and $n_k/(n_{k-1}+k)\sim \rho$ for large $k$.
Since $(\theta^2-\e)\rho>1$,
\begin{align}\label{eq: III2}
\sum_{j=k}^{\infty}\PP\left(H_{k, \theta_2} \right)\le \sum_{j=k}^{\infty} j^{-(\theta_2^2-\e) \rho}\rightarrow 0,\ \ \ \text{as } k\rightarrow +\infty.
\end{align}
Putting \eqref{eq: III}, \eqref{eq: III1} and \eqref{eq: III2} together,  we have
\begin{align*}
\PP\left(\bigcup_{j=n}^{\infty}E_{n_j, \theta_1-\theta_2} \right)
\ge &\lim_{k\rightarrow\infty}
\PP\left(\bigcup_{j=k}^{\infty}E_{n_j, \theta_1-\theta_2} \right)\\
\ge&\lim_{k\rightarrow \infty}\left( \PP\left(\bigcup_{j=k}^{\infty}G_{j, \theta_1}\right)  -\sum_{j=k}^{\infty}\PP\left(H_{j, \theta_2} \right)\right)\\
=&1.
\end{align*}
The above inequlities implies that
$$
\PP(E_{n_k, \theta_1-\theta_2} \ \ \text{i.o.})=1,
$$
which implies that
\begin{align}
\limsup_{ k\rightarrow +\infty}\frac{S_{n_k}}{\sqrt{2n_k\log\log n_k}} \ge (\theta_1-\theta_2)\sigma, \ \  a.s..
\end{align}
By the arbitrariness of $1/\sqrt\rho<\theta_2<\theta_1<1-1/\rho$,  the lower bound \eqref{lower} holds. The proof is complete.
\end{proof}

\vskip0.5cm

\vskip0.5cm

\noindent{\bf Acknowledgments}:
R. Wang thanks the  Faculty of Science and Technology, University of Macau, for inviting and supporting.    He is supported by Natural Science Foundation of China 11301498, 11431014 and the Fundamental Research Funds for the Central Universities.   L. Xu is supported by the grant SRG2013-00064-FST. Both of the   authors are supported by the research project RDAO/RTO/0386-SF/2014.

\bibliographystyle{amsplain}

\end{document}